%
%
%

\documentclass[graybox]{svmult}


\usepackage{type1cm}        
%
\usepackage[bottom]{footmisc}

\usepackage{amsmath,amsfonts,amssymb}
\usepackage{enumitem}
\usepackage{tikz}
\usepackage{array}
\usepackage{makecell}
\newcolumntype{x}[1]{>{\centering\arraybackslash}p{#1}}
\usepackage{tikz}
\newcommand\diag[4]{%
  \multicolumn{1}{p{#2}|}{\hskip-\tabcolsep
  $\vcenter{\begin{tikzpicture}[baseline=0,anchor=south west,inner sep=#1]
  \path[use as bounding box] (0,0) rectangle (#2+2\tabcolsep,\baselineskip);
  \node[minimum width={#2+3\tabcolsep},minimum height=\baselineskip+\extrarowheight] (box) {};
  \draw (box.north west) -- (box.south east);
  \node[anchor=south west] at (box.south west) {#3};
  \node[anchor=north east] at (box.north east) {#4};
 \end{tikzpicture}}$\hskip-\tabcolsep}}

\makeindex             


\begin{document}
	
	\title*{On the effect of boundary conditions on the scalability of Schwarz methods}
	
	\author{Gabriele Ciaramella and Luca Mechelli}
	\authorrunning{G. Ciaramella and L. Mechelli}
	\institute{Gabriele Ciaramella \at Politecnico di Milano \email{gabriele.ciaramella@polimi.it}
		\and Luca Mechelli \at Universit\"at Konstanz \email{luca.mechelli@uni-konstanz.de}}
	%
	%
	\maketitle
	
	\abstract*{..}
	\abstract*{In contrast with classical Schwarz theory, recent results have shown that for special domain geometries, 
	one-level Schwarz methods can be scalable. This property has been proved for the Laplace equation and external 
	Dirichlet boundary conditions. Much less is known if mixed boundary conditions are considered.
	This short manuscript focuses on the convergence and scalability analysis of one-level parallel Schwarz method 
	and optimized Schwarz method for several different external configurations of boundary conditions, i.e., 
	mixed Dirichlet, Neumann and Robin conditions.}

\section{Introduction}
\vspace*{-4mm}
	
This work is concerned with convergence and weak scalability\footnote{Here, weak scalability is understood 
in the sense that the contraction  factor does not deteriorate as the number $N$ of subdomains increases and, 
hence, the number of iterations, needed to reach a given tolerance, is uniformly bounded in $N$; see, e.g., \cite{Ciaramella_mini_10_CiaramellaGander4}.} analysis of one-level 
parallel Schwarz method (PSM) and optimized Schwarz method (OSM) for the solution of the problem
\begin{equation}\label{Ciaramella_mini_10_eq:P_2D}
\begin{split}
&-\Delta u = f \, \text{ in $\Omega$}, \quad
u(a_1,y)=u(b_N,y) =0 \: \text{ $y\in(0,1)$}, \\
&\mathcal{B}_b(u)(x)=\mathcal{B}_t(u)(x)=0  \: \text{ $x\in(a_1,b_N),$}
\end{split}
\end{equation}
where $\Omega$ is the domain depicted in Fig.~\ref{Ciaramella_mini_10_fig:1},
and $\mathcal{B}_b$ and $\mathcal{B}_t$ are either Dirichlet, or Neumann or Robin operators:
\begin{align*}
\text{Dirichlet:} \; &\mathcal{B}_{b}(u)(x) = u(0,x), &&\mathcal{B}_{t}(u)(x) = u(1,x), \\
\text{Neumann:} \; &\mathcal{B}_{b}(u)(x) = \partial_y u(0,x), &&\mathcal{B}_{t}(u)(x) = \partial_y u(1,x), \\
\text{Robin:} \, &\mathcal{B}_{b}(u)(x) = q u(0,x) - \partial_y u(0,x), &&\mathcal{B}_{t}(u)(x) = q u(1,x) + \partial_y u(1,x).
\end{align*}
Here, $q>0$ and the subscripts `$b$' and `$t$' stand for `bottom' and `top'.
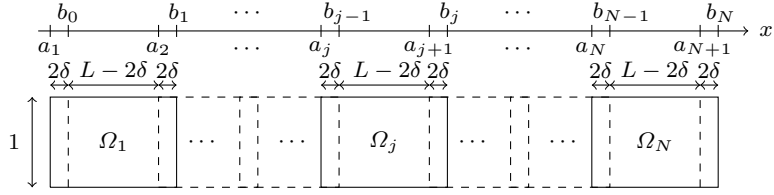
\begin{figure}
\centering
\begin{tikzpicture}[scale=0.8]
\draw[black,->] (-4.0,-0.6) -- (7.8,-0.6);
\node at (8.10,-0.6) {$x$};

\draw[black] (-3.5,-0.5) -- (-3.5,-0.7);
\node at (-3.5,-0.3) {\small{$b_0$}};
\draw[black] (-3.8,-0.5) -- (-3.8,-0.7);
\node at (-3.8,-0.9) {\small{$a_1$}};
\draw[black] (-2.0,-0.5) -- (-2.0,-0.7);
\node at (-2.0,-0.9) {\small{$a_2$}};
\draw[black] (-1.7,-0.5) -- (-1.7,-0.7);
\node at (-1.65,-0.3) {\small{$b_1$}};

\node at (-0.50,-0.3) {\small{$\cdots$}};
\node at (-0.50,-0.9) {\small{$\cdots$}};

\draw[black] (0.7,-0.5) -- (0.7,-0.7);
\node at (0.7,-0.9) {\small{$a_j$}};
\draw[black] (1.0,-0.5) -- (1.0,-0.7);
\node at (1.15,-0.3) {\small{$b_{j-1}$}};
\draw[black] (2.5,-0.5) -- (2.5,-0.7);
\node at (2.5,-0.9) {\small{$a_{j+1}$}};
\draw[black] (2.8,-0.5) -- (2.8,-0.7);
\node at (2.85,-0.3) {\small{$b_{j}$}};

\node at (4.00,-0.3) {\small{$\cdots$}};
\node at (4.00,-0.9) {\small{$\cdots$}};

\draw[black] (5.2,-0.5) -- (5.2,-0.7);
\node at (5.2,-0.9) {\small{$a_{N}$}};
\draw[black] (5.5,-0.5) -- (5.5,-0.7);
\node at (5.70,-0.3) {\small{$b_{N-1}$}};
\draw[black] (7.3,-0.5) -- (7.3,-0.7);
\node at (7.35,-0.3) {\small{$b_{N}$}};
\draw[black] (7.0,-0.5) -- (7.0,-0.7);
\node at (7.05,-0.9) {\small{$a_{N+1}$}};

\begin{scope}[shift={(0,-3.2)}]
\draw[black,<->] (-4.1,0) -- (-4.1,1.5);
\draw[black,dashed] (-3.5,0) -- (-3.5,1.5);
\node at (-4.4,0.75) {$1$};
\draw[black] (-3.8,0) -- (-1.7,0) -- (-1.7,1.5) -- (-3.8,1.5) -- (-3.8,0);
\node at (-2.75,0.75) {$\Omega_1$};
\draw[black,<->] (-3.8,1.7) -- (-3.5,1.7);
\node at (-3.65,1.95) {\footnotesize{$2\delta$}};
\draw[black,<->] (-2.0,1.7) -- (-1.7,1.7);
\node at (-1.85,1.95) {\footnotesize{$2\delta$}};
\draw[black,<->] (-3.5,1.7) -- (-2.0,1.7);
\node at (-2.75,1.95) {\footnotesize{$L-2\delta$}};

\draw[black,dashed] (-2.0,0) -- (-0.35,0) -- (-0.35,1.5) -- (-2.0,1.5) -- (-2.0,0);
\node at (-1.25,0.75) {$\cdots$};
\draw[black,dashed] (-0.65,0) -- (1.0,0) -- (1.0,1.5) -- (-0.65,1.5) -- (-0.65,0);
\node at (0.25,0.75) {$\cdots$};

\draw[black] (0.7,0) -- (2.8,0) -- (2.8,1.5) -- (0.7,1.5) -- (0.7,0);
\node at (1.75,0.75) {$\Omega_j$};
\draw[black,<->] (0.7,1.7) -- (1.0,1.7);
\node at (0.85,1.95) {\footnotesize{$2\delta$}};
\draw[black,<->] (2.5,1.7) -- (2.8,1.7);
\node at (2.65,1.95) {\footnotesize{$2\delta$}};
\draw[black,<->] (1.0,1.7) -- (2.5,1.7);
\node at (1.8,1.95) {\footnotesize{$L-2\delta$}};

\draw[black,dashed] (2.5,0) -- (4.15,0) -- (4.15,1.5) -- (2.5,1.5) -- (2.5,0);
\node at (3.25,0.75) {$\cdots$};
\draw[black,dashed] (3.85,0) -- (5.5,0) -- (5.5,1.5) -- (3.85,1.5) -- (3.85,0);
\node at (4.75,0.75) {$\cdots$};

\draw[black] (5.2,0) -- (7.3,0) -- (7.3,1.5) -- (5.2,1.5) -- (5.2,0);
\node at (6.25,0.75) {$\Omega_N$};
\draw[black,dashed] (7.0,0) -- (7.0,1.5);
\draw[black,<->] (5.2,1.7) -- (5.5,1.7);
\node at (7.15,1.95) {\footnotesize{$2\delta$}};
\draw[black,<->] (7.0,1.7) -- (7.3,1.7);
\node at (5.35,1.95) {\footnotesize{$2\delta$}};
\draw[black,<->] (5.5,1.7) -- (7.0,1.7);
\node at (6.25,1.95) {\footnotesize{$L-2\delta$}};
\end{scope}
\end{tikzpicture}
\caption{Two-dimensional chain of $N$ rectangular fixed-sized
  subdomains.}
\label{Ciaramella_mini_10_fig:1}
\end{figure}
As shown in Fig.~\ref{Ciaramella_mini_10_fig:1}, the domain $\Omega$ is 
the union of subdomains $\Omega_j$, $j=1,\dots,N$, defined as
$\Omega_j:=(a_j,b_j)\times(0,1)$, where $a_1=0$, $a_j=L+a_{j-1}$
for $j = 2,\dots,N+1$ and $b_j=a_{j+1}+2\delta$ for $j = 0,\dots,N$.
Hence, the length of each subdomain is $L+2\delta$ and the
length of the overlap is $2\delta$ with $\delta \in (0,L/2)$.

It is well known that one-level Schwarz methods are
not weakly scalable, if the number of subdomains increases and the whole domain $\Omega$ is fixed. 
However, the recent work \cite{Ciaramella_mini_10_Stamm3}, published in the field of molecular dynamics,
has drawn attention to the opposite case in which the number of subdomains increases, but their size remains 
unchanged, and, as a result, the size of the whole domain $\Omega$ increases.
In this setting, weak scalability of PSM and OSM for \eqref{Ciaramella_mini_10_eq:P_2D} with
Dirichlet boundary conditions is studied in \cite{Ciaramella_mini_10_CiaramellaGander,Ciaramella_mini_10_CiaramellaGander4}.
Scalability results for the PSM in case of more general geometries of the (sub)domains are presented in
\cite{Ciaramella_mini_10_CiaramellaGander2,Ciaramella_mini_10_CiaramellaGander3,Ciaramella_mini_10_CHS2}.
In these works, only external Dirichlet conditions are discussed and, in such a case, weak scalability is shown. 
A short remark about the non-scalability in case of external Neumann conditions is 
given in \cite{Ciaramella_mini_10_CiaramellaGander4}. 
Similar results have been recently presented in \cite{Ciaramella_mini_10_bootland2020}
for time-harmonic problems.
The goal of this work is to study the effect of different (possibly mixed) external boundary conditions
on convergence and scalability of PSM and OSM. In particular, we will show that only in
the case of (both) external Neumann conditions at the top and the bottom of $\Omega$, PSM and OSM are not scalable.
External Dirichlet conditions lead to the fastest convergence, while external Robin conditions
lead to a convergence that depends heavily on the parameter $q$.

One-level PSM and OSM for the solution of \eqref{Ciaramella_mini_10_eq:P_2D} are
\begin{equation}\label{Ciaramella_mini_10_eq:chainP1_2D}
\begin{split}
&- \Delta u_j^n = f_j \, \text{ in $\Omega_j$}, \\
&\mathcal{B}_b(u_j^n)(x)=\mathcal{B}_t(u_j^n)(x)=0  \: \text{ $x\in(a_1,b_N),$}\\
&\mathcal{T}_{\ell}(u_j^n)(a_j)= \mathcal{T}_{\ell}(u_{j-1}^{n-1})(a_j), 
\quad
\mathcal{T}_r(u_j^n)(b_j) = \mathcal{T}_r(u_{j+1}^{n-1})(b_j) ,
\end{split}
\end{equation}
for $j=2,\dots,N$, where $\mathcal{T}_\ell$ and $\mathcal{T}_r$
are Dirichlet trace operators,
\begin{equation}\label{Ciaramella_mini_10_eq:T_D}
\mathcal{T}_{\ell}(u_j^n)(a_j)=u(a_j,y) \text{ and } \mathcal{T}_r(u_j^n)(b_j)=u(b_j,y),
\end{equation}
for the PSM, and Robin trace operators,
\begin{equation}\label{Ciaramella_mini_10_eq:T_R}
\medmuskip=0.2mu
\thinmuskip=0.2mu
\thickmuskip=0.2mu
\nulldelimiterspace=1.2pt
\scriptspace=1.2pt    
\arraycolsep1.2em
\mathcal{T}_{\ell}(u_j^n)(a_j)=p u(a_j,y) - \partial_x u(a_j,y) \text{ and } \mathcal{T}_r(u_j^n)(b_j)=pu(b_j,y)+ \partial_x u(b_j,y) ,
\end{equation}
with $p>0$ for the OSM. The subscript `$\ell$' and `$r$' stand for `left' and `right'.
For $j=1$ the condition at $a_1$ must be replaced by
$u_1^n(a_1,y) = 0$ and for $j=N$ the condition at $b_N$ must be replaced by
$u_N^n(b_N,y) = 0$.
In this paper, `external conditions' and `transmission conditions' will always refer to the
conditions obtained by the pairs $(\mathcal{B}_b,\mathcal{B}_u)$ and
$(\mathcal{T}_\ell,\mathcal{T}_r)$, respectively.
Note that the Robin parameter $p$ of the OSM can be chosen independently of the
Robin parameter $q$ used for the operators $\mathcal{B}_b$ and $\mathcal{B}_t$.
We analyze convergence of PSM and OSM by a Fourier analysis in Section \ref{Ciaramella_mini_10_sec:BC_scal}. 
For this purpose, we use the solutions of eigenproblems of the 1D Laplace operators with mixed boundary conditions.
These are studied in Section~\ref{Ciaramella_mini_10_sec:diag}.
Finally, results of numerical experiments are presented in Section~\ref{Ciaramella_mini_10_sec:numerics}.

\section{Laplace eigenpairs for mixed external conditions}\label{Ciaramella_mini_10_sec:diag}
\vspace*{-4mm}

Consider the 1D eigenvalue problem
\begin{equation}\label{Ciaramella_mini_10_eq:eig_1D}
\varphi''(y) = -\lambda \varphi(y), \text{ for $y \in (0,1)$, $\quad\mathcal{B}_{b}(\varphi)(0)=\mathcal{B}_{t}(\varphi)(1)=0$},
\end{equation}
and six pairs of boundary operators $(\mathcal{B}_{b},\mathcal{B}_{t})$:
\begin{itemize}[itemsep=0.1em,leftmargin=11mm,labelsep=2.0mm]
\item[{\rm (DD)}] $\mathcal{B}_{b}(\varphi)(0)=\varphi(0)$,
$\quad \qquad \qquad \mathcal{B}_{t}(\varphi)(1)=\varphi(1)$,

\item[{\rm (DR)}] $\mathcal{B}_{b}(\varphi)(0)=\varphi(0)$,
$\quad \qquad \qquad \mathcal{B}_{t}(\varphi)(1)=q\varphi(1)+\varphi'(1)$,

\item[{\rm (DN)}] $\mathcal{B}_{b}(\varphi)(0)=\varphi(0)$,
$\quad \qquad \qquad \mathcal{B}_{t}(\varphi)(1)=\varphi'(1)$,

\item[{\rm (RR)}] $\mathcal{B}_{b}(\varphi)(0)=q\varphi(0)-\varphi'(0)$,
$\quad \mathcal{B}_{t}(\varphi)(1)=q\varphi(1)+\varphi'(1)$,

\item[{\rm (NR)}] $\mathcal{B}_{b}(\varphi)(0)=\varphi'(0)$,
	$\quad \qquad \quad \; \; \, \mathcal{B}_{t}(\varphi)(1)=q\varphi(1)+\varphi'(1)$,

\item[{\rm (NN)}] $\mathcal{B}_{b}(\varphi)(0)=\varphi'(0)$,
$\quad \qquad \quad \; \; \, \mathcal{B}_{t}(\varphi)(1)=\varphi'(1)$,
\end{itemize}
where $q>0$ and `D', `R' and `N' stand for `Dirichlet', `Robin' and `Neumann'. 
For all these six cases the eigenvalue problem \eqref{Ciaramella_mini_10_eq:eig_1D}
is solved by orthonormal (in $L^2(0,1)$) Fourier basis functions.

\begin{theorem}[Eigenpairs of the Laplace operator]\label{Ciaramella_mini_10_thm:1}
Let $q>0$. The eigenproblems \eqref{Ciaramella_mini_10_eq:eig_1D} with the above external 
conditions are solved by the non-trivial eigenpairs $(\varphi_k,\lambda_k)$ given by
\begin{itemize}[itemsep=0.1em,leftmargin=11mm,labelsep=2.0mm]
\item[{\rm (DD)}] $\varphi_k(y)=\sqrt{2}\sin(\pi k y)$, $\lambda_k=\pi^2 k^2$, $k=1,2,\dots$

\item[{\rm (DR)}] $\varphi_k(y)=\sqrt{\frac{4 \mu_k}{2 \mu_k - \sin(2\mu_k)}}\sin(\mu_k y)$,
$\lambda_k = \mu_k^2$, $k=1,2,\dots$, where \linebreak $\mu_k \in (k\pi-\pi/2,k\pi)$,
$k=1,2,\dots$, are roots $\widehat{d}(x):=q\sin(x)+x\cos(x)$.
Moreover, $\lim_{q\rightarrow 0} \mu_1(q)=\pi/2$ and $\lim_{q\rightarrow \infty} \mu_1(q)=\pi$.

\item[{\rm (DN)}] $\varphi_k(y)=\sqrt{2}\sin(\frac{2k+1}{2} \pi y)$,
$\lambda_k = \frac{(2k+1)^2}{4} \pi^2$, $k=0,1,2,\dots$

\item[{\rm (RR)}] $\varphi_k(y)=\sqrt{\frac{4 \tau_k}{(\tau_k^2-q^2)\sin(2\tau_k)+4q\tau_k \sin(\tau_k)^2+2\tau_k^3+2 q^2 \tau_k}}\bigl(q\sin(\tau_k y)+ \tau_k \cos(\tau_k y)\bigr)$,
$\lambda_k=\tau_k^2$, $k=1,2,\dots$, where
$\tau_k \in (0,\pi)$, $k=1,2,\dots$, are roots of $\widetilde{d}(x):=
2 q x \cos(x)+(q^2-x^2)\sin(x)$.
Moreover, $\lim_{q \rightarrow 0} \tau_1(q) = 0$
and $\lim_{q \rightarrow \infty} \tau_1(q) = \pi$.

\item[{\rm (NR)}] $\varphi_k(y)=\sqrt{\frac{4 \nu_k}{2 \nu_k + \sin(2\nu_k)}}\cos(\nu_k y)$,
$\lambda_k = \nu_k^2$, $k=1,2,\dots$, where \linebreak $\nu_k\in~((k-1)\pi,(k-\frac{1}{2})\pi)$,
$k=1,2,\dots$, are roots of $d(x):=x\sin(x)-q\cos(x)$. Moreover, $\lim_{q\rightarrow 0} \nu_1(q)=0$ and $\lim_{q\rightarrow \infty} \nu_1(q)=\pi/2$.

\item[{\rm (NN)}] $\varphi_k(y)=\sqrt{2}\cos(\pi k y)$, $\lambda_k=\pi^2 k^2$, $k=0,1,2,\dots$

\end{itemize}
\end{theorem}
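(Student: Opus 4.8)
The plan is to treat \eqref{Ciaramella_mini_10_eq:eig_1D} as a regular Sturm--Liouville problem with separated, self-adjoint boundary conditions and to proceed case by case. First I would show that every eigenvalue is positive, with the single exception of the zero eigenvalue arising in the (NN) case. Multiplying the ODE by $\varphi$ and integrating by parts over $(0,1)$ gives
\begin{equation*}
\lambda \int_0^1 \varphi^2 \, dy = \int_0^1 (\varphi')^2 \, dy - \varphi'(1)\varphi(1) + \varphi'(0)\varphi(0).
\end{equation*}
For a Dirichlet endpoint the corresponding boundary term vanishes; for a Neumann endpoint $\varphi'$ vanishes there; and for a Robin endpoint the relations $\varphi'(1)=-q\varphi(1)$ and $\varphi'(0)=q\varphi(0)$ turn the boundary contributions into $q\varphi(1)^2\ge 0$ and $q\varphi(0)^2\ge 0$, since $q>0$. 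Hence $\lambda\ge 0$ in all six cases, and $\lambda=0$ forces $\varphi$ to be constant, which is admissible only for (NN). This justifies writing $\lambda=\mu^2$ with $\mu>0$ and taking the general solution $\varphi(y)=A\cos(\mu y)+B\sin(\mu y)$, so the negative- and zero-eigenvalue branches need not be examined separately.

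Next I would impose the two external conditions on this general solution. Each pair $(\mathcal{B}_b,\mathcal{B}_t)$ yields a homogeneous $2\times2$ system in $(A,B)$, and a non-trivial eigenfunction exists precisely when its determinant vanishes; this determinant is the (elementary or transcendental) characteristic equation. For (DD), (DN) and (NN) the bottom condition immediately annihilates one coefficient and the top condition reduces to $\sin\mu=0$ or $\cos\mu=0$, giving the stated closed forms $\mu=k\pi$ or $\mu=(2k+1)\pi/2$. For (DR) the bottom condition forces $A=0$, and inserting $\varphi=B\sin(\mu y)$ into $q\varphi(1)+\varphi'(1)=0$ produces exactly $\widehat d(\mu)=q\sin\mu+\mu\cos\mu=0$; for (NR) symmetrically $B=0$ and the top condition gives $d(\nu)=\nu\sin\nu-q\cos\nu=0$; for (RR) the bottom Robin condition fixes $B=qA/\tau$, whence $\varphi\propto q\sin(\tau y)+\tau\cos(\tau y)$, and substituting into the top condition and simplifying yields $\widetilde d(\tau)=2q\tau\cos\tau+(q^2-\tau^2)\sin\tau=0$. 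The normalization constants then follow by computing $\int_0^1\varphi^2\,dy$ in closed form and rescaling; for (RR) this integral is the only genuinely lengthy computation, but it is entirely elementary.

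To finish I would establish that the roots lie in the claimed intervals and that the list is complete. Orthogonality and completeness come for free from Sturm--Liouville theory: the eigenvalues form a simple increasing sequence tending to $+\infty$, and the normalized eigenfunctions form an orthonormal basis of $L^2(0,1)$. It then remains to localize the roots, which I would do with the intermediate value theorem applied to $\widehat d$, $d$, $\widetilde d$ at the interval endpoints; for instance, for (DR) one checks that $\widehat d((2k-1)\pi/2)=q(-1)^{k-1}$ and $\widehat d(k\pi)=k\pi(-1)^{k}$ have opposite signs, forcing a root in $(k\pi-\pi/2,k\pi)$, with the analogous computation for $d$ and $\widetilde d$. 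Matching these roots one-to-one with the ordered Sturm--Liouville spectrum then shows there is exactly one per interval and that no eigenvalue is omitted. The limiting statements for $\mu_1,\tau_1,\nu_1$ I would read off from the characteristic equations: dividing $\widehat d$ by $q$ shows its first root tends to the first positive zero of $\sin$ (namely $\pi$) as $q\to\infty$ and to the first positive zero of $x\cos x$ (namely $\pi/2$) as $q\to0$, the reductions of $d$ and $\widetilde d$ giving the remaining limits, and continuity and monotonicity of the first root in $q$ (via the implicit function theorem) make these rigorous. I expect the main obstacle to be precisely this root-counting step for the three Robin cases: a bare sign-change argument yields existence but not uniqueness within each interval, and the cleanest remedy is to lean on the monotone ordering of the Sturm--Liouville eigenvalues rather than on a direct monotonicity analysis of $\widehat d$, $d$ and $\widetilde d$.
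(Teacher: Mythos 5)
Your proposal is correct and follows essentially the same route as the paper: the same energy identity (multiply by $\varphi$, integrate by parts, use the boundary conditions and $q>0$) to conclude $\lambda \ge 0$, followed by the ansatz $\varphi(y)=A\cos(\sqrt{\lambda}\,y)+B\sin(\sqrt{\lambda}\,y)$ with the boundary conditions imposed to produce the characteristic equations and normalization constants. The paper's proof stops at that point, leaving the root localization, completeness, and the $q\to 0$, $q\to\infty$ limits to the reader; your intermediate-value-theorem and Sturm--Liouville additions merely fill in those implicit details.
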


\begin{proof}
If we multiply \eqref{Ciaramella_mini_10_eq:eig_1D} with $\varphi$, integrate over $[0,1]$,
and integrate by parts, we get $\lambda \int_0^1 |\varphi(y)|^2 dy
=\int_0^1 |\varphi'(y)|^2 dy-\varphi'(1)\varphi(1)+\varphi'(0)\varphi(0)$.
Using any of the above external conditions (and that $q>0$, for the Robin ones) 
one gets $\lambda \geq 0$. We refer to, e.g., \cite[Section 4.1]{Ciaramella_mini_10_Olver2013}
for similar discussions.
Now, all the cases can be proved by using the ansatz
$\varphi(y) = A \cos(\sqrt{\lambda}y)+B \sin(\sqrt{\lambda}y)$,
which clearly satisfies \eqref{Ciaramella_mini_10_eq:eig_1D}, and computing, e.g., $A$ and $\lambda$
in such a way that $\varphi(y)$ satisfies the two external conditions and $B$ as 
a normalization factor.
\end{proof}

The coefficients $\nu_1$, $\mu_1$ and $\tau_1$ as functions of $q$
are shown in Fig.~\ref{Ciaramella_mini_10_fig:2} (left),
\begin{figure}[t]
\centering
\includegraphics[scale=0.30]{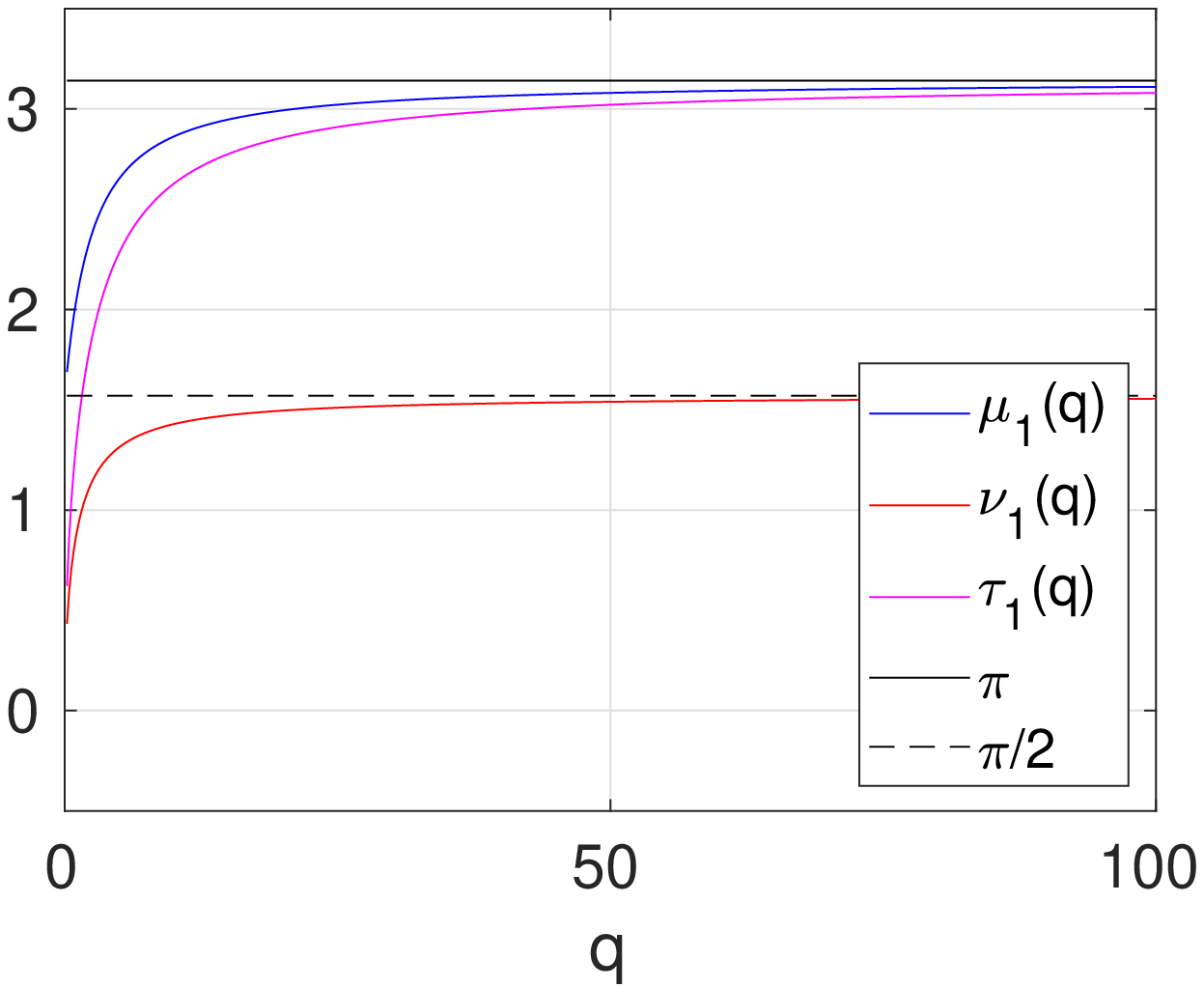}
\includegraphics[scale=0.30]{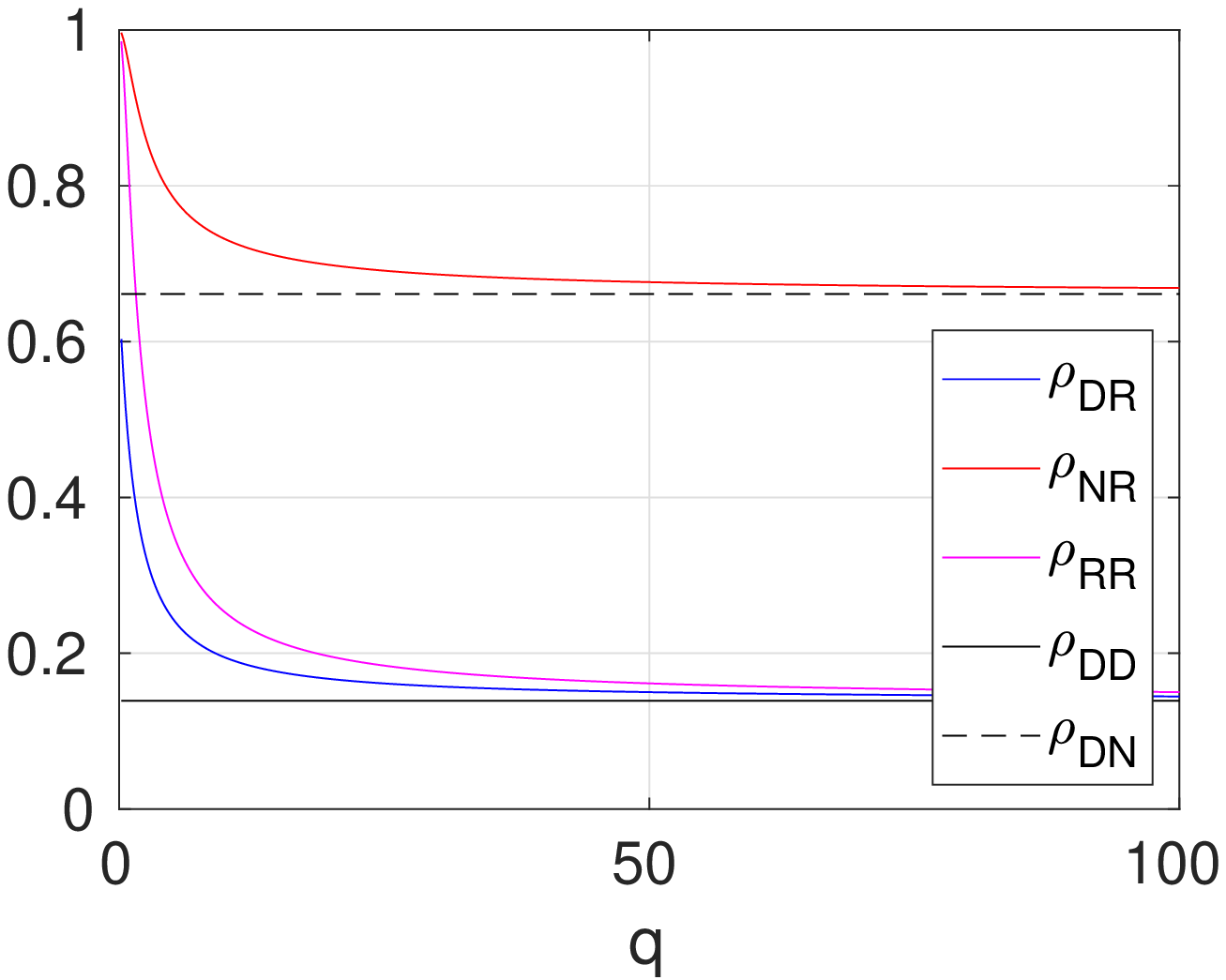}
\caption{Left: Maps $q\mapsto\mu_1(q)$, $q\mapsto\nu_1(q)$  and $q\mapsto\tau_1(q)$ .
Right: $\rho_{\rm DR}$, $\rho_{{\rm NR}}$, $\rho_{\rm DD}$ and $\rho_{\rm DN}$ as functions of $q$
and for $\delta=0.1$ and $L=1.0$.}
\label{Ciaramella_mini_10_fig:2}
\end{figure}
where we can observe that $\nu_1(q)< \frac{\pi}{2} < \mu_1(q) < \pi$ and $0 < \tau_1(q) < \pi$,
and that the maps $q\mapsto \nu_1(q)$, $q\mapsto \mu_1(q)$ and $q\mapsto \tau_1(q)$ 
increase monotonically and approach, respectively, $\frac{\pi}{2}$ and $\pi$ as $q \rightarrow \infty$.
Hence, by taking the limit $q \rightarrow 0$,
one can pass from the conditions {\rm (DR)}, {\rm (RR)} and {\rm (NR)} 
to {\rm (DN)}, {\rm (NN)} and {\rm (NN)}, respectively.
Similarly, by taking the limit $q \rightarrow \infty$,
the conditions {\rm (DR)}, {\rm (RR)} and {\rm (NR)} 
become {\rm (DD)}, {\rm (DD)} and {\rm (DN)}, respectively.

\section{Convergence and scalability}\label{Ciaramella_mini_10_sec:BC_scal}
\vspace*{-4mm}

Consider the Schwarz method \eqref{Ciaramella_mini_10_eq:chainP1_2D} and
any pair $(\mathcal{B}_b,\mathcal{B}_t)$ of operators as in Section \ref{Ciaramella_mini_10_sec:diag}.
The Fourier expansions of $u_j^n(x,y)$, $j=1,\dots,N$, are
\begin{equation}
u_j^n(x,y) = \sum_{k} \widehat{u}_j^n(x,\lambda_k) \varphi_k(y),
\end{equation} 
where the sum is over $k=1,2,\dots$ for {\rm (DD)}, {\rm (DR)},
{\rm (RR)} and {\rm (NR)}, and over $k=0,1,2,\dots$ for {\rm (DN)}
and {\rm (NN)}.
The functions $\varphi_k$ depend on the external boundary conditions
and are the ones obtained in Theorem~\ref{Ciaramella_mini_10_thm:1}.
The Fourier coefficients $\widehat{u}_j^n(x,\lambda_k)$ satisfy
\begin{equation}\label{Ciaramella_mini_10_eq:chainP1_1D}
\begin{split}
- \partial_{xx}\widehat{u}_j^n(x,\lambda_k)+\lambda_k \widehat{u}_j^n(x,\lambda_k) &= \widehat{f}_j(x,\lambda_k) \, \text{ in $(a_j,b_j)$}, \\
\mathcal{T}_{\ell}(\widehat{u}_j^n(\cdot,\lambda_k))(a_j)&= \mathcal{T}_{\ell}(\widehat{u}_{j-1}^{n-1}(\cdot,\lambda_k))(a_j),  \\
\mathcal{T}_r(\widehat{u}_j^n(\cdot,\lambda_k))(b_j) &= \mathcal{T}_r(\widehat{u}_{j+1}^{n-1}(\cdot,\lambda_k))(b_j) ,
\end{split}
\end{equation}
for $j=2,\dots,N$. For $j=1$, the condition at $a_1$ must be replaced by
$u_1^n(a_1) = 0$ and for $j=N$ the condition at $b_N$ must be replaced by
$u_N^n(b_N) = 0$.
If the operators $\mathcal{T}_{\ell}$ and $\mathcal{T}_r$ correspond
to Dirichlet conditions (see \eqref{Ciaramella_mini_10_eq:T_D}), then \eqref{Ciaramella_mini_10_eq:chainP1_1D} is a PSM.
If they correspond to Robin conditions (see \eqref{Ciaramella_mini_10_eq:T_R}), 
then \eqref{Ciaramella_mini_10_eq:chainP1_1D} is an OSM. The convergence of the iteration \eqref{Ciaramella_mini_10_eq:chainP1_1D}  
is analyzed in Theorem \ref{Ciaramella_mini_10_thm:2}.

\begin{theorem}[Convergence of Schwarz methods in Fourier space]\label{Ciaramella_mini_10_thm:2}
The contraction factors of the Schwarz methods\footnote{The contraction factor for 
\eqref{Ciaramella_mini_10_eq:chainP1_1D} (corresponding to the $k$-th Fourier component) 
is the spectral radius of the Schwarz iteration matrix; see \cite{Ciaramella_mini_10_CiaramellaGander,Ciaramella_mini_10_CiaramellaGander4}.} \eqref{Ciaramella_mini_10_eq:chainP1_1D}
are bounded by
\begin{equation}\label{Ciaramella_mini_10_eq:bound}
\rho(\lambda_k,\delta) =\frac{e^{2\lambda_k\delta}+e^{\lambda_k L}}{e^{2\lambda_k\delta+\lambda_k L}+1}.
\end{equation}
Moreover, it holds that $\rho(\lambda_k,\delta)\in[0,1]$
with $\rho(0,\delta)=1$ (independently of $N$), and that
$\lambda \mapsto \rho(\lambda,\delta)$ 
is strictly monotonically decreasing.
\end{theorem}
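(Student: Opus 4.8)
The plan is to prove the bound on the contraction factors and the three elementary properties of $\rho$ separately, since the former is an exercise in explicitly solving the Fourier-transformed error equations, while the latter follows from a convenient closed form of \eqref{Ciaramella_mini_10_eq:bound}.

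For the bound I would work with the error $\hat e_j^n:=\hat u_j^n-\hat u_j$, which by linearity solves \eqref{Ciaramella_mini_10_eq:chainP1_1D} with $\hat f_j\equiv 0$ and homogeneous data. Since $\lambda_k\ge 0$ by Theorem~\ref{Ciaramella_mini_10_thm:1}, the general solution of $-\partial_{xx}\hat e_j^n+\lambda_k\hat e_j^n=0$ on $(a_j,b_j)$ is spanned by $e^{\pm\sqrt{\lambda_k}\,x}$, hence each $\hat e_j^n$ is determined by its two interface values. Substituting the (Dirichlet or Robin) transmission conditions then writes the vector of interface data at step $n$ as $M(\lambda_k)$ times the vector at step $n-1$, and the contraction factor is the spectral radius $\rho(M(\lambda_k))$ (see the footnote and \cite{Ciaramella_mini_10_CiaramellaGander,Ciaramella_mini_10_CiaramellaGander4}). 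The reduction to $M(\lambda_k)$ is the same for every choice of external top/bottom conditions $(\mathcal{B}_b,\mathcal{B}_t)$: these enter only through the value of $\lambda_k$ supplied by Theorem~\ref{Ciaramella_mini_10_thm:1}.

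For the PSM I would evaluate the explicit Dirichlet solution on $\Omega_j$ at the two neighbouring interfaces, which lie at distances $2\delta$ and $L$ from the ends of $\Omega_j$ (recall $|\Omega_j|=L+2\delta$ and $b_j-a_{j+1}=b_{j-1}-a_j=2\delta$). This produces the two nonnegative transfer coefficients $\sinh(2\sqrt{\lambda_k}\,\delta)/\sinh(\sqrt{\lambda_k}(L+2\delta))$ and $\sinh(\sqrt{\lambda_k}L)/\sinh(\sqrt{\lambda_k}(L+2\delta))$, whose sum, by the identity $\sinh X+\sinh Y=2\sinh\tfrac{X+Y}{2}\cosh\tfrac{X-Y}{2}$, equals exactly the right-hand side of \eqref{Ciaramella_mini_10_eq:bound}. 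Thus $M(\lambda_k)$ is nonnegative with interior row sums equal to $\rho(\lambda_k,\delta)$, while the rows associated with $\Omega_1$ and $\Omega_N$ have strictly smaller sums because the external Dirichlet conditions at $a_1,b_N$ suppress one neighbour; hence $\rho(M(\lambda_k))\le\|M(\lambda_k)\|_\infty\le\rho(\lambda_k,\delta)$. For the OSM the same scheme applies with the Robin traces \eqref{Ciaramella_mini_10_eq:T_R}, producing Robin-to-Robin transfer coefficients that also depend on the parameter $p$. This is the step I expect to be the main obstacle: one has to verify that for every $p>0$ these coefficients stay nonnegative and that their row sums do not exceed $\rho(\lambda_k,\delta)$ — most cleanly by showing that the OSM contraction factor is dominated by the PSM one. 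Since the Robin traces involve derivatives of the hyperbolic solution, controlling signs and obtaining a bound uniform in $p$ requires care, whereas the boundary bookkeeping is identical to the PSM case.

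Finally, for the properties of $\rho$ I would put \eqref{Ciaramella_mini_10_eq:bound} in closed form: multiplying numerator and denominator by $e^{-\lambda(\delta+L/2)}$ gives $\rho(\lambda,\delta)=\cosh(\lambda(L/2-\delta))/\cosh(\lambda(L/2+\delta))$. From this, $\rho(0,\delta)=1$ is immediate and manifestly independent of $N$. Since $\delta\in(0,L/2)$ we have $0<L/2-\delta<L/2+\delta$, and because $\cosh$ is positive and strictly increasing on $[0,\infty)$ the numerator is positive and no larger than the denominator, so $\rho(\lambda,\delta)\in(0,1]\subset[0,1]$. For strict monotonicity I would differentiate: $\frac{d}{d\lambda}\log\rho=(L/2-\delta)\tanh(\lambda(L/2-\delta))-(L/2+\delta)\tanh(\lambda(L/2+\delta))$, which is negative for $\lambda>0$ because $\beta\mapsto\beta\tanh(\lambda\beta)$ is strictly increasing on $(0,\infty)$ (its derivative $\tanh(\lambda\beta)+\lambda\beta/\cosh^2(\lambda\beta)$ is positive). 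Hence $\lambda\mapsto\rho(\lambda,\delta)$ is strictly decreasing, which is the last of the stated properties.
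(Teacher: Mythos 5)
The OSM half of the theorem is not proved in your proposal. You explicitly defer it (``the main obstacle''), proposing to check that the Robin-to-Robin transfer coefficients stay nonnegative with row sums at most $\rho(\lambda_k,\delta)$, ``most cleanly by showing that the OSM contraction factor is dominated by the PSM one''. But that domination is exactly what has to be established, and the Dirichlet bookkeeping does not carry over: with the Robin traces \eqref{Ciaramella_mini_10_eq:T_R} the transfer coefficients involve factors $(\lambda\pm p)$ whose signs change with $p$, so nonnegativity fails in general (indeed, the paper's own quantity $\zeta(\lambda,\delta,p)$ becomes negative for large $p$), and the row-sum/infinity-norm argument breaks down. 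The paper closes this case by importing from \cite{Ciaramella_mini_10_CiaramellaGander4} (Theorem 3 there) the explicit bound $\max\{\varphi(\lambda,\delta,p),|\zeta(\lambda,\delta,p)|\}$ on the OSM contraction factor, and then carrying out a genuine $p$-analysis: $\varphi(\lambda,\delta,p)\le\varphi(\lambda,\delta,0)=\lim_{\widetilde p\to\infty}\varphi(\lambda,\delta,\widetilde p)$; $p\mapsto\zeta(\lambda,\delta,p)$ is strictly decreasing, positive at $p=0$ and negative in the limit $p\to\infty$; and finally $\varphi(\lambda,\delta,0)\ge\zeta(\lambda,\delta,0)\ge\lim_{p\to\infty}|\zeta(\lambda,\delta,p)|$, with $\varphi(\lambda,\delta,0)$ equal to the stated $\rho$. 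Some argument of this kind, uniform in the Robin parameter $p$, is indispensable, and your proposal contains none of it.

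There is also an internal mismatch in your PSM computation: solving $-\partial_{xx}\hat e_j^n+\lambda_k\hat e_j^n=0$ gives the decay rate $\sqrt{\lambda_k}$, so your two coefficients sum to $\cosh\bigl(\sqrt{\lambda_k}(L/2-\delta)\bigr)/\cosh\bigl(\sqrt{\lambda_k}(L/2+\delta)\bigr)$, which is \emph{not} ``exactly the right-hand side of \eqref{Ciaramella_mini_10_eq:bound}'' as written, since there $\lambda_k$ enters the exponents linearly. The paper (following the frequency convention of \cite{Ciaramella_mini_10_CiaramellaGander,Ciaramella_mini_10_CiaramellaGander4}) treats $\lambda$ as the exponential rate throughout its proof, and your derivation silently identifies $\sqrt{\lambda_k}$ with $\lambda_k$; you must commit to one convention and state the bound accordingly. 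By contrast, the final part of your proposal is correct and even a bit cleaner than the paper's: the closed form $\rho(\lambda,\delta)=\cosh(\lambda(L/2-\delta))/\cosh(\lambda(L/2+\delta))$ together with the log-derivative argument yields $\rho\in[0,1]$, $\rho(0,\delta)=1$ and strict monotonicity, whereas the paper obtains the monotonicity by differentiating $\varphi(\lambda,\delta,0)$ directly and inspecting the sign of the resulting expression.
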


\begin{proof}
The Dirichlet case follows from 
\cite[Lemma 2 and Theorem 3]{Ciaramella_mini_10_CiaramellaGander}. See also
\cite[Lemma 2 and Theorem 1]{Ciaramella_mini_10_CiaramellaGander4}.
We focus here on the Robin case.
From Theorem 3 in \cite{Ciaramella_mini_10_CiaramellaGander4} and the corresponding
proof we have that the contraction factor of the OSM is bounded
by $\max\{ \varphi(\lambda,\delta,p) , |\zeta(\lambda,\delta,p)| \}$
where
\begin{equation*}
\begin{split}
\varphi(\lambda,\delta,p) &:= 
\frac{(\lambda+p)^2 e^{2\delta \lambda} -(\lambda-p)^2e^{-2\delta \lambda}+(\lambda+p)|\lambda-p|(e^{\lambda L}-e^{-\lambda L})}{(\lambda+p)^2 e^{\lambda L+2\lambda\delta} -(\lambda-p)^2 e^{-\lambda L-2\lambda\delta}} \geq 0,  \\
 \zeta(\lambda,\delta,p) &:= \frac{(\lambda+p)e^{-\lambda L}+(\lambda-p)e^{\lambda L}}{(\lambda+p)e^{\lambda(L+2\delta)}+(\lambda-p)e^{-\lambda(L+2\delta)}},
\end{split}
\end{equation*}
with $\varphi(\lambda,\delta,p) \leq \varphi(\lambda,\delta,0)=
\lim_{\widetilde{p} \rightarrow \infty}\varphi(\lambda,\delta,\widetilde{p})= \frac{e^{2\delta \lambda}-e^{-2\delta \lambda} +e^{\lambda L} -e^{-\lambda L}}{e^{\lambda L+2\delta \lambda}-e^{-\lambda L-2\delta \lambda}}$ for all $\lambda \geq 0$ and $\delta>0$.
If we compute the derivative of $\lambda\mapsto\varphi(\lambda,\delta,0)$
we get
\begin{equation*}
\partial_{\lambda} \varphi(\lambda,\delta,0)
= - \frac{L(e^{4 \delta \lambda + L \lambda}-e^{L \lambda}) + 2 \delta (e^{2 \delta \lambda +  2 L \lambda}-e^{2 \delta \lambda}) }{(e^{2 \delta \lambda+L \lambda}+1)^2},
\end{equation*}
which is negative for any $\lambda \geq 0$ and $\delta>0$.
Thus, $\lambda \mapsto \varphi(\lambda,\delta,0)$ is strictly monotonically
decreasing.
Let us now study the function $\zeta(\lambda,\delta,p)$.
Direct calculations reveal that 
$\partial_p \zeta(\lambda,\delta,p) = -\frac{2\lambda e^{2\delta \lambda}(e^{4\lambda(\delta + L)}-1)}{((\lambda+p) e^{4 \delta \lambda +2 L \lambda} + \lambda-p)^2}$,
which is negative for any $\lambda \geq 0$ and $\delta>0$,
and $\zeta(\lambda,\delta,0)=\frac{(e^{2 L \lambda}+1)e^{2 \delta \lambda}}{e^{4 \delta \lambda +2 L \lambda}+1}>0$ and $\lim_{p \rightarrow \infty} \zeta(\lambda,\delta,p) = - \frac{(e^{2 L \lambda}-1)e^{2 \delta \lambda}}{e^{4 \delta \lambda +2 L \lambda}-1}<0$
for any $\lambda \geq 0$ and $\delta>0$. These observations imply that
$p\mapsto \zeta(\lambda,\delta,p)$ is strictly monotonically decreasing
and attains its maximum at $p=0$.
Finally, a direct comparison shows that
$\varphi(\lambda,\delta,0) \geq \zeta(\lambda,\delta,0)\geq\lim_{p \rightarrow \infty}\left|\zeta(\lambda,\delta,p)\right|$
and the result follows, because
$\varphi(\lambda,\delta,0)=\frac{e^{2\delta \lambda}-e^{-2\delta \lambda} +e^{\lambda L} -e^{-\lambda L}}{e^{\lambda L+2\delta \lambda}-e^{-\lambda L-2\delta \lambda}}=\frac{e^{2\lambda\delta}+e^{\lambda L}}{e^{2\lambda\delta+\lambda L}+1}$.
\end{proof}

Theorem \ref{Ciaramella_mini_10_thm:2} gives the same bound \eqref{Ciaramella_mini_10_eq:bound} for 
the convergence factors of PSM and OSM.
This fact is not surprising. First, it is well known that OSM converges faster than PSM
for $\delta>0$. Hence, a convergence bound for the PSM is a valid bound also for the OSM.
Second, in the above proof the convergence bound for the OSM
is obtained for $p \rightarrow \infty$, which corresponds to passing from Robin transmission
conditions to Dirichlet transmission conditions.
The bound \eqref{Ciaramella_mini_10_eq:bound} is based on the ones
obtained in \cite{Ciaramella_mini_10_CiaramellaGander,Ciaramella_mini_10_CiaramellaGander4}. 
These are quite sharp for large values of $N$; see, e.g., 
\cite[Fig. 4 and Fig. 5]{Ciaramella_mini_10_CiaramellaGander4}.

We can now prove our main convergence result, which allows us
to study convergence and scalability of PSM and OSM 
for all the external conditions considered in Section \ref{Ciaramella_mini_10_sec:diag}.

\begin{theorem}[Convergence of PSM and OSM]\label{Ciaramella_mini_10_thm:3}
The contraction factors (in the $L^2$ norm) of PSM and OSM for the 
solution to \eqref{Ciaramella_mini_10_eq:P_2D} are bounded by
\begin{align*}
&{\rm (DD)} \; \rho_{\rm DD}(\delta) := \rho(\pi^2,\delta),
&&{\rm (DR)} \; \rho_{\rm DR}(\delta,q) := \rho(\mu_1(q)^2,\delta), \\
&{\rm (DN)} \; \rho_{\rm DN}(\delta) := \rho(\pi^2/4,\delta),
&&{\rm (RR)} \; \rho_{\rm RR}(\delta,q) := \rho(\tau_1(q)^2,\delta), \\
&{\rm (NR)} \; \rho_{{\rm NR}}(\delta,q) := \rho(\nu_1(q)^2,\delta),
&&{\rm (NN)} \; \rho_{\rm NN}(\delta) := \rho(0,\delta)=1,
\end{align*}
where $q \in (0,\infty)$ and $\rho(\lambda,\delta)$ is defined in Theorem \ref{Ciaramella_mini_10_thm:2}.
Moreover, for any $\delta>0$ we have that
\begin{equation}\label{Ciaramella_mini_10_eq:chain}
\begin{split}
&\rho_{\rm DD}(\delta)
<\rho_{\rm DR}(\delta,q)<\rho_{\rm DN}(\delta)
<\rho_{{\rm NR}}(\delta,q)<\rho_{\rm NN}(\delta)=1,
\end{split}
\end{equation}
\vspace*{-15mm}

\begin{equation}\label{Ciaramella_mini_10_eq:chain2}
\begin{split}
&\rho_{\rm DD}(\delta) < \rho_{\rm RR}(\delta,q) < \rho_{\rm NN}(\delta)=1.
\end{split}
\end{equation}
\end{theorem}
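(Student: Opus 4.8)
The plan is to reduce everything to the single-variable analysis of the map $\lambda \mapsto \rho(\lambda,\delta)$ established in Theorem~\ref{Ciaramella_mini_10_thm:2}, and then to order the six external configurations by ordering their smallest eigenvalues $\lambda_1$. First I would recall from Theorem~\ref{Ciaramella_mini_10_thm:2} that the contraction factor of each Fourier mode $k$ is bounded by $\rho(\lambda_k,\delta)$, and that $\lambda \mapsto \rho(\lambda,\delta)$ is strictly monotonically decreasing on $[0,\infty)$. Since the $L^2$ contraction factor of the full method is governed by the slowest-converging Fourier mode, and monotonicity of $\rho$ means the slowest mode is exactly the one with the \emph{smallest} eigenvalue $\lambda_1$, the overall bound in each case is $\rho(\lambda_1,\delta)$. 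This immediately justifies the six definitions $\rho_{\rm DD}=\rho(\pi^2,\delta)$, $\rho_{\rm DN}=\rho(\pi^2/4,\delta)$, $\rho_{\rm NN}=\rho(0,\delta)=1$, and $\rho_{\rm DR}=\rho(\mu_1^2,\delta)$, $\rho_{\rm RR}=\rho(\tau_1^2,\delta)$, $\rho_{\rm NR}=\rho(\nu_1^2,\delta)$, by substituting the smallest eigenvalues read off from Theorem~\ref{Ciaramella_mini_10_thm:1}.

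Next, to prove the ordering chains \eqref{Ciaramella_mini_10_eq:chain} and \eqref{Ciaramella_mini_10_eq:chain2}, I would again invoke strict monotonicity of $\rho$: because $\rho$ is strictly decreasing, the inequalities among the $\rho$'s are equivalent to the \emph{reversed} inequalities among the underlying smallest eigenvalues. Thus it suffices to establish, for every fixed $q \in (0,\infty)$,
\begin{equation*}
0 < \nu_1(q)^2 < \tfrac{\pi^2}{4} < \mu_1(q)^2 < \pi^2, \qquad 0 < \tau_1(q)^2 < \pi^2,
\end{equation*}
and then apply $\rho$ to reverse each strict inequality. These eigenvalue bounds are precisely the localizations already recorded in Theorem~\ref{Ciaramella_mini_10_thm:1} and in the discussion following it: there we have $\nu_1(q) \in (0,\pi/2)$, $\mu_1(q) \in (\pi/2,\pi)$, and $\tau_1(q) \in (0,\pi)$, together with the observation that $\nu_1 < \pi/2 < \mu_1 < \pi$. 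Squaring (all quantities are positive, so squaring preserves the ordering) gives exactly the eigenvalue chain above. Applying the strictly decreasing map $\rho(\cdot,\delta)$ then flips it into \eqref{Ciaramella_mini_10_eq:chain}, and separately into \eqref{Ciaramella_mini_10_eq:chain2} using $0 < \tau_1^2 < \pi^2$.

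I expect the only real obstacle to be the reduction-to-the-smallest-mode step: one must argue carefully that the $L^2$ contraction factor of the whole iteration is the supremum over $k$ of the per-mode factors, so that by orthonormality of the $\varphi_k$ the global factor equals $\sup_k \rho(\lambda_k,\delta) = \rho(\lambda_1,\delta)$, the last equality following from monotonicity and from $\lambda_1$ being the smallest eigenvalue. This uses that the Fourier modes decouple in \eqref{Ciaramella_mini_10_eq:chainP1_1D} and that the basis is orthonormal in $L^2(0,1)$, as noted before Theorem~\ref{Ciaramella_mini_10_thm:1}. Once that supremum characterization is in place, the remaining work is purely the eigenvalue localization, which is already supplied by Theorem~\ref{Ciaramella_mini_10_thm:1}, so the strict inequalities follow at once. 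The degenerate case (NN), where $\lambda_1 = 0$ and $\rho_{\rm NN}=1$, should be flagged explicitly as the non-scalable configuration, consistent with $\rho(0,\delta)=1$ being independent of $N$.
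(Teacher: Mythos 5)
Your proposal is correct and follows essentially the same route as the paper's proof: monotonicity of $\lambda\mapsto\rho(\lambda,\delta)$ from Theorem~\ref{Ciaramella_mini_10_thm:2}, reduction of the $L^2$ contraction factor to the smallest Fourier eigenvalue via Parseval's identity (orthonormality of the $\varphi_k$), and the eigenvalue localizations $\nu_1(q)<\pi/2<\mu_1(q)<\pi$ and $\tau_1(q)\in(0,\pi)$ from Theorem~\ref{Ciaramella_mini_10_thm:1} to obtain the strict chains \eqref{Ciaramella_mini_10_eq:chain} and \eqref{Ciaramella_mini_10_eq:chain2}. No gaps; your explicit attention to the supremum-over-modes step is exactly the point the paper settles by invoking Parseval.
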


\begin{proof}
According to Theorem \ref{Ciaramella_mini_10_thm:2}, the bounds of the Fourier contraction factor $\rho(\lambda,\delta)$ is monotonically decreasing in $\lambda$.
Therefore, an upper bound for the convergence factor of PSM and OSM (in the $L^2$ norm) can be obtained by taking the maximum over the admissible Fourier frequencies $\lambda_k$
and invoking Parseval's identity (see, e.g., \cite{Ciaramella_mini_10_CiaramellaGander}).
Recalling Theorem \ref{Ciaramella_mini_10_thm:1}, these maxima are attained at $\lambda_1=\pi^2$ for {\rm (DD)},
$\lambda_1=\mu_1^2$ for {\rm (DR)}, 
$\lambda_0=\pi^2/4$ for {\rm (DN)}, 
$\lambda_1=\tau_1^2$ for {\rm (RR)}, 
$\lambda_1=\nu_1^2$ for {\rm (NR)}, and
$\lambda_0=0$ for {\rm (NN)}.
The inequalities \eqref{Ciaramella_mini_10_eq:chain} and \eqref{Ciaramella_mini_10_eq:chain2} follow from the
monotonicity $\lambda \mapsto \rho(\lambda,\delta)$
and the fact that $\nu_1(q)<\frac{\pi}{2}<\mu_1(q)<\pi$
and $\tau_1(q) \in (0,\pi)$.
\end{proof}

The inequalities \eqref{Ciaramella_mini_10_eq:chain} and \eqref{Ciaramella_mini_10_eq:chain2} imply that
the contraction factor is bounded, independently of $N$, by a constant strictly smaller than $1$
for all the cases except {\rm (NN)}.
In the case {\rm (NN)}, the first Fourier frequency is $\lambda_0=0$.
Hence, the coefficients $\widehat{u}_j^n(x,\lambda_0)$ are generated
by the 1D Schwarz method
\begin{equation}\label{Ciaramella_mini_10_eq:chainP1_1D2}
\begin{split}
- \partial_{xx}\widehat{u}_j^n(x,\lambda_0) &= \widehat{f}_j(x,\lambda_0) \, \text{ in $(a_j,b_j)$}, \\
\mathcal{T}_{\ell}(\widehat{u}_j^n(\cdot,\lambda_0))(a_j)&= \mathcal{T}_{\ell}(\widehat{u}_{j-1}^{n-1}(\cdot,\lambda_0))(a_j),  \\
\mathcal{T}_r(\widehat{u}_j^n(\cdot,\lambda_0))(b_j) &= \mathcal{T}_r(\widehat{u}_{j+1}^{n-1}(\cdot,\lambda_0))(b_j) ,
\end{split}
\end{equation}
which is known to be not scalable; see, e.g., \cite{Ciaramella_mini_10_CiaramellaGander4,Ciaramella_mini_10_CHS1}.
The scalability of PSM and OSM for different external conditions applied at the top and
at the bottom of the domain is summarized in Table \ref{Ciaramella_mini_10_tab:1}.
\begin{table}[t]
\setlength{\tabcolsep}{3.5pt}
\begin{tabular}{ l|c|c|c } 
 \diag{.0em}{1.0cm}{\footnotesize{bottom}}{\footnotesize{top}}& Dirichlet & Robin & Neumann \\ \hline
 Dirichlet & yes & yes & yes \\ \hline
 Robin & yes & yes & yes \\ \hline
 Neumann & yes & yes & no \\  \hline
\end{tabular}
\setlength{\tabcolsep}{3.5pt}
\begin{tabular}{ l|c|c|c } 
 \diag{.0em}{1.0cm}{\footnotesize{bottom}}{\footnotesize{top}}& Dirichlet & Robin & Neumann \\ \hline
 Dirichlet & - & yes & - \\ \hline
 Robin & yes & no & no \\ \hline
 Neumann & - & no & - \\  \hline
\end{tabular}
\caption{Left: Scalability of PSM and OSM for different external conditions (for a fixed and finite $q>0$) 
applied at the top and at the bottom of the domain.
Right: Robustness of PSM and OSM with respect to $q \in [0,\infty]$.}
\label{Ciaramella_mini_10_tab:1}
\end{table}
Inequalities \eqref{Ciaramella_mini_10_eq:chain} and \eqref{Ciaramella_mini_10_eq:chain2} lead to another interesting observation.
The contraction factors are clearly influenced by the external boundary conditions.
Dirichlet conditions lead to faster convergence than Robin conditions, which in turn
lead to faster convergence than Neumann conditions. For example, if one external condition
is of the Dirichlet type, then PSM and OSM converge faster if the other condition
is of the Dirichlet type and slower if this is of Robin and even slower for the Neumann type.
The case {\rm (RR)} is slightly different, because the corresponding convergence of PSM
and OSM depends heavily on the Robin parameter $q$.
The behavior of the bounds $\rho_{\rm RR}(\delta,q)$, $\rho_{\rm DR}(\delta,q)$ 
and $\rho_{\rm NR}(\delta,q)$ with respect to $q$ is depicted in Fig. \ref{Ciaramella_mini_10_fig:2} (right),
which shows the bounds discussed in Theorem \ref{Ciaramella_mini_10_thm:3} as functions
of $q$ (recall that $\rho_{\rm NN}=1$).
Here, we can observe that the inequalities \eqref{Ciaramella_mini_10_eq:chain} and \eqref{Ciaramella_mini_10_eq:chain2} 
are satisfied and that
\begin{itemize}\itemsep0em
\item As $q$ increases
the Dirichlet part of the Robin external condition dominates. In addition, the bounds
$\rho_{\rm RR}$ and $\rho_{\rm DR}$ decrease and approach
$\rho_{\rm DD}$ as $q \rightarrow \infty$. 
Similarly, $\rho_{\rm NR}$ decreases and approaches $\rho_{\rm DN}$.
\item As $q$ decreases the Neumann part of the Robin external condition 
dominates. In addition, the bounds
$\rho_{\rm NR}$ and $\rho_{\rm RR}$ decrease and approach
$\rho_{\rm NN}=1$ as $q \rightarrow 0$.
Similarly, $\rho_{\rm DR}$ increases and approaches $\rho_{\rm DN}$.
\end{itemize}
These observations lead to Tab. \ref{Ciaramella_mini_10_tab:1} (right), where we summarize the robustness
of PSM and OSM with respect to the parameter $q$. The methods are robust 
with respect to $q$ only if one of the two external boundary conditions is
of Dirichlet type. This is due to the fact that Robin conditions become Neumann conditions
for $q \rightarrow 0$.

\section{Numerical experiments}\label{Ciaramella_mini_10_sec:numerics}
\vspace*{-4mm}

In this section, we test the scalability of PSM and OSM by numerical simulations.
For this purpose, we run PSM and OSM for all the external boundary conditions discussed in
this paper and measure the number of iterations required to reach a tolerance on
the error of $10^{-6}$. To guarantee that the initial errors contain all frequencies,
the methods are initialized with random initial guesses.
In all cases, each subdomain is discretized with a uniform grid of
size $90$ interior points in direction $x$ and $50$ interior points in direction $y$.
The mesh size is $h = \frac{L}{51}$, with $L=1$, and the overlap parameter is $\delta=10h$. 
For the OSM the robin parameter is $p=10$. The Robin parameter $q$ of the external Robin conditions
is $q=10$, and the {\rm (RR)} case is also tested with $q=0.1$. 
The results of our experiments are shown in Tab. \ref{Ciaramella_mini_10_tab:2}
and confirm the theoretical results discussed in the previous sections.
\begin{table}[t]
\centering
\setlength{\tabcolsep}{3.5pt}
\begin{tabular}{ c|c|c|c|c|c|c|c } 
$N$ & {\rm DD} & {\rm DR}(10) & {\rm DN} & {\rm RR}(10) & {\rm NR}(10) & {\rm NN} & {\rm RR}(0.1) \\ \hline
3  &  12 - 9 &   13 - 10 &   27 - 19 &   14 - 10 &   26 - 19 &   77  - 54 &   65 - 45\\
4  &  13 - 9 &   14 - 10 &   29 - 21 &   15 - 11 &   29 - 21 &  130  - 90 &  95 - 66\\
5  &  13 - 9 &   14 - 10 &   31 - 22 &   15 - 11 &   31 - 22 &  194  - 134 &  124 - 86 \\
10 &  13 - 10 &  14 - 10 &   33 - 24 &   15 - 11 &   34 - 24 &  $>$401 - $>$401 &  227 - 155\\
20 &  13 - 10 &  14 - 10 &   34 - 24 &   15 - 11 &   35 - 24 &  $>$401 - $>$401 &  293 - 199\\
30 &  13 - 10 &  14 - 10 &   34 - 24 &   15 - 11 &   35 - 24 &  $>$401 - $>$401 &  311 - 210\\
40 &  13 - 10 &  14 - 10 &   34 - 24 &   15 - 11 &   35 - 24 &  $>$401 - $>$401 &  317 - 214\\
50 &  13 - 10 &  14 - 10 &   34 - 24 &   15 - 11 &   35 - 24 &  $>$401 - $>$401 &  319 - 216\\ \hline
\end{tabular}
\caption{Number of iterations of PSM (left) and OSM (right) needed to reduce the norm of the error
below a tolerance of $10^{-6}$ for increasing number $N$ of fixed-sized subdomains.
The maximum number of allowed iterations is 401. This limit is only reached in the {\rm (NN)} case, 
for which PSM and OSM are not scalable.}
\label{Ciaramella_mini_10_tab:2}
\end{table} 
	
	\vspace*{-3mm}
	\bibliographystyle{plain}
	\bibliography{Ciaramella_mini_10}
\end{document}